\providecommand{\U}[1]{\protect\rule{.1in}{.1in}}
\newtheorem{theorem}{Theorem}
\newtheorem{definition}[theorem]{Definition}
\newtheorem{example}[theorem]{Example}
\newtheorem{lemma}[theorem]{Lemma}
\newtheorem{problem}[theorem]{Problem}
\newtheorem{proposition}[theorem]{Proposition}
\newtheorem{remark}[theorem]{Remark}
\newenvironment{proof}[1][Proof]{\noindent\textbf{#1.} }{\ \rule{0.5em}{0.5em}}
\begin{document}
\title{$\alpha$-Kuramoto partitions: graph partitions from the frustrated Kuramoto
model generalise equitable partitions\\ }
\author{Stephen Kirkland}
\affiliation{Hamilton Institute, National University of Ireland Maynooth, Maynooth, Co.
Kildare, Ireland.}
\author{Simone Severini}
\affiliation{Department of Computer Science, and Department of Physics \& Astronomy,
University College London, WC1E 6BT London, United Kingdom}

\begin{abstract}
The Kuramoto model describes the collective dynamics of a system of coupled
oscillators. An $\alpha$\emph{-Kuramoto partition} is a graph partition
induced by the Kuramoto model, when the oscillators include a phase
frustration parameter. We prove that every equitable partition is an $\alpha
$-Kuramoto partition, but that the converse does necessarily not hold. We give
an exact characterisation of $\alpha$-Kuramoto bipartitions.

\end{abstract}
\maketitle

\section{Introduction}

The Kuramoto model is a mathematical model of collective dynamics in a system
of coupled oscillators \cite{winfree67, kuramoto75}. It has been applied in
many contexts to describe synchronisation phenomena: \emph{e.g.}, in
engineering, for superconducting Josephson junctions, and in biology, for
congregations of synchronously flashing fireflies\emph{\ }\cite{acebron05};
the model has also been proposed to simulate neuronal synchronisation in
vision, memory, and other phenomena of the brain \cite{cumin2007}.

We consider a twofold generalisation of the Kuramoto model:\ firstly, while in
the standard case every oscillator is coupled with all the others, we
associate the oscillators to the vertices of a graph, as in \cite{strogatz01};
secondly, with the purpose of including an effect of the graph structure on
the dynamics, we take into account a phase frustration parameter, as in
\cite{nicosia12}.

Let $G=(V,E)$ be a connected graph on $n$ vertices without self-loops or
multiple edges. Let $A(G)$ be the adjacency matrix of $G$. For each vertex
$i\in V(G)$, we shall study the following equation, which defines a
\emph{frustrated Kuramoto model }(or, equivalently, \emph{frustrated rotator
model}) as introduced in \cite{sakaguchi86}:%
\begin{equation}
\theta_{i}^{\prime}(t):=\omega+\lambda\sum_{j=1}^{n}A_{i,j}\sin(\theta
_{j}(t)-\theta_{i}(t)-\alpha),\text{\qquad}t\geq0, \label{kuramoto}%
\end{equation}
where $\alpha\in\lbrack0,\pi/2)$ is a fixed, but arbitrary, phase frustration
parameter; this is chosen to be equal for every $i\in V(G)$. The parameter
$\omega$ is the natural frequency and $\lambda>0$ is the strength of the
interaction, when looking at the system as a set of oscillators. We set
$\omega=0\,$and $\lambda=1$; note that for our purposes, there is no real loss
of generality in doing so. By taking $\alpha=0$, we obtain the standard
Kuramoto model \cite{kuramoto75}. We shall consider the case $\alpha\in
(0,\pi/2)$.

Let $\theta_{i}(t)$ be a global smooth solution to the frustrated Kuramoto
model for a vertex $i\in V(G)$. There are two natural notions of phase synchronisation:

\begin{itemize}
\item Two vertices $i,j\in V(G)$ are said to exhibit \emph{phase
synchronisation} when $\theta_{i}(t)=\theta_{j}(t)$, for every $t\geq0$;

\item Two vertices $i,j\in V(G)$ are said to exhibit \emph{asymptotic phase
synchronisation} when $\lim_{t\rightarrow\infty} (\theta_{i}(t)-\theta
_{j}(t))=0$.
\end{itemize}

We study graph partitions suggested by these notions. Section 2 contains the
definition of an $\alpha$-Kuramoto partition. We show that such partitions are
a generalisation of equitable partitions. In Section 3, we exactly
characterise $\alpha$-Kuramoto bipartitions. We also illustrate our results
with several examples, revisiting and extending the mathematical analysis done
in \cite{nicosia12}. Some open problems are posed in Section 4.

\section{$\alpha$-Kuramoto partitions}

Phase synchronisation in the frustrated Kuramoto model induces a partition of
a graph. Such a partition will be called an $\alpha$-Kuramoto partition (see
the formal definition below). The $\alpha$-Kuramoto partitions are closely
related to equitable partitions, a well-known object of study in graph theory
\cite{gr04}. Theorem \ref{equi} states that every equitable partition
corresponds to phase synchronisation in the frustrated Kuramoto model.
However, not every partition induced by phase synchronisation in the model is
an equitable partition.

A \emph{partition} of a graph $G$ is a partition of $V(G)$, \emph{i.e.}, a
division of $V(G)$ into disjoint, non-empty sets that cover $V(G)$. The sets
of a partition are called \emph{parts}.

\begin{definition}
[Equitable partition]A partition $\mathcal{S}$ of a graph $G$ with parts
$S_{1},...,S_{k}$ is \emph{equitable} if the number of neighbours in $S_{j}$
of a vertex $v\in S_{i}$ depends only on the choice of the parts $S_{i}$ and
$S_{j}$. In this case, the number of neighbours in $S_{j}$ of any vertex in
$S_{i}$ is denoted $\gamma_{ij}$.
\end{definition}

The frustrated Kuramoto model as defined in (\ref{kuramoto}) is naturally
associated to a partition as follows:

\begin{definition}
[$\alpha$-Kuramoto partition]\label{kura-def} Fix $\alpha\in(0,\pi/2)$. A
partition ${\mathcal{S}}=S_{1}\cup S_{2}\cup\ldots\cup S_{k}$ of a graph $G$
is called an $\alpha$\emph{-Kuramoto partition} if there is an initial
condition
\begin{equation}
\theta_{j}(0)=x_{j},\text{\qquad}j=1,\ldots,n, \label{inl}%
\end{equation}
such that the solution to (\ref{kuramoto}) with initial condition (\ref{inl})
has the following properties:

\begin{enumerate}
\item if $i,j\in S_{l}$ for some $l$, then $\theta_{i}(t)=\theta_{j}(t)$ for
all $t\geq0$;

\item if $i\in S_{p},j\in S_{q}$ for distinct indices $p,q$, then as functions
on $[0,\infty)$, we have $\theta_{i}\neq\theta_{j}$.
\end{enumerate}
\end{definition}

\begin{example}
\label{ex1} For each $\alpha\in(0,\pi/2),$ the graph in Figure \ref{fig1} has
an $\alpha$-Kuramoto partition with two parts:\ the vertices $\{1,...,6\}$ are
in one part (black); the vertices $\{7,...,10\}$ are in the other part
(white).%
\begin{figure}[h]%
\centering
\includegraphics[
height=1.6734in,
width=2.7276in
]%
{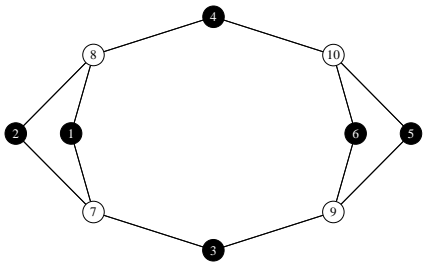}%
\caption{An $\alpha$-Kuramoto partition with two parts. }%
\label{fig1}%
\end{figure}

\end{example}

\begin{remark}
\emph{A fundamental property of the standard Kuramoto model (}$\alpha
=0$\emph{) is that }$\theta_{i}(t)=\theta_{j}(t)$\emph{, for all }$t\geq
0$\emph{\ and every two vertices }$i$\emph{\ and }$j$\emph{. It is then
crucial that }$\alpha>0$\emph{, in order to define an }$\alpha$\emph{-Kuramoto
partition.}
\end{remark}

The next result establishes a connection between equitable partitions and
$\alpha$-Kuramoto partitions. In particular, it states that every equitable
partition is also an $\alpha$-Kuramoto partition.

\begin{theorem}
\label{equi}Let $G$ be a connected graph on $n$ vertices. Suppose that
${\mathcal{S}}$ is an equitable partition of $G$. Then for any $\alpha
\in(0,\pi/2)$, ${\mathcal{S}}$ is an $\alpha$-Kuramoto partition of $G$.
\end{theorem}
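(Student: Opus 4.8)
The plan is to exploit the defining property of an equitable partition to collapse the $n$-dimensional system (\ref{kuramoto}) down to a $k$-dimensional ``quotient'' system carrying one phase variable per part, solve that, and lift the solution back up. First I would choose an initial condition that is constant on each part and separated across parts: fix \emph{distinct} real numbers $\phi_{1},\ldots,\phi_{k}$ and set $x_{j}=\phi_{l}$ whenever $j\in S_{l}$. The guiding heuristic is that, because every vertex in $S_{l}$ has exactly $\gamma_{lm}$ neighbours in each part $S_{m}$, all vertices in a common part experience identical forcing and should therefore stay synchronised for all time.

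To make this precise I would introduce the quotient system
\begin{equation}
\psi_{l}^{\prime}(t)=\sum_{m=1}^{k}\gamma_{lm}\sin\bigl(\psi_{m}(t)-\psi_{l}(t)-\alpha\bigr),\qquad\psi_{l}(0)=\phi_{l},\qquad l=1,\ldots,k.
\label{quotient}
\end{equation}
Its right-hand side is smooth with uniformly bounded derivatives (each sine term is bounded and the $\gamma_{lm}$ are fixed integers), so (\ref{quotient}) has a unique global smooth solution $\psi_{1},\ldots,\psi_{k}$. I would then define $\theta_{i}(t):=\psi_{l}(t)$ for every $i\in S_{l}$ and verify by direct substitution that this $\theta$ solves the full system (\ref{kuramoto}): grouping the sum over the neighbours of a fixed $i\in S_{l}$ according to the part each neighbour lies in, and using $\sum_{j\in S_{m}}A_{i,j}=\gamma_{lm}$, the right-hand side of (\ref{kuramoto}) collapses exactly onto the right-hand side of (\ref{quotient}). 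Since the vector field in (\ref{kuramoto}) is smooth, the solution for the chosen constant-on-parts initial data is unique and global, so this lifted function is that solution.

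With the solution in hand, both conditions of Definition \ref{kura-def} follow. Property (1) is immediate by construction, since $\theta_{i}=\theta_{j}=\psi_{l}$ whenever $i,j\in S_{l}$. For property (2), if $i\in S_{p}$ and $j\in S_{q}$ with $p\neq q$, then $\theta_{i}(0)=\phi_{p}\neq\phi_{q}=\theta_{j}(0)$ because the $\phi_{l}$ were chosen distinct; hence $\theta_{i}$ and $\theta_{j}$ already disagree at $t=0$ and so are distinct as functions on $[0,\infty)$.

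The conceptual crux, and the single step that genuinely uses the equitable hypothesis rather than soft ODE theory, is the consistency of the quotient ansatz: I expect this to be the place the argument must be pinned down carefully, since it is precisely the part-independence of the neighbour counts $\gamma_{lm}$ that makes the reduced vector field well defined and forces the lifted function to satisfy (\ref{kuramoto}). By contrast, the separation demanded in property (2) is essentially free here, achieved merely by separating the initial phases rather than by any dynamical estimate; this is also what distinguishes the equitable case from the converse, where such clean separation need not be available.
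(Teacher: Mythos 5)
Your proposal is correct and follows essentially the same route as the paper's proof: both reduce to the $k$-dimensional quotient system with coefficients $\gamma_{lm}$, invoke existence/uniqueness (Picard) for that system and for (\ref{kuramoto}), lift the quotient solution by setting $\theta_i=\psi_l$ for $i\in S_l$, and obtain the separation of distinct parts simply from the distinctness of the initial phases. The only cosmetic difference is that you spell out global existence via boundedness of the vector field and the regrouping $\sum_{j\in S_m}A_{i,j}=\gamma_{lm}$ slightly more explicitly than the paper does.
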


\begin{proof}
Since ${\mathcal{S}}=S_{1}\cup S_{2}\cup\ldots\cup S_{k}$ is an equitable
partition, there are nonnegative integers $\gamma_{i,j}$, with $i,j=1,\ldots
,k$, such that for each pair of indices $i,j$ between $1$ and $k,$ any vertex
$p\in S_{i},$ has precisely $\gamma_{i,j}$ neighbours in $S_{j}.$ Given
scalars $c_{1}<c_{2}<\ldots<c_{k}$, we consider the following system of
differential equations
\begin{equation}
f_{i}^{\prime}(t)=\sum_{j=1}^{k}\gamma_{i,j}\sin(f_{j}(t)-f_{i}(t)-\alpha
),\text{\qquad}i=1,\ldots,k, \label{f-kur}%
\end{equation}
with initial condition $f_{j}(0)=c_{j}$, for $j=1,\ldots,k$. Observe that the
right side of (\ref{f-kur}) has continuous partial derivatives with respect to
each $f_{l}.$ Hence by Picard's existence theorem \cite{teschl2012}, there is
a (unique) solution to this equation with the given initial conditions.

Next, we consider the system (\ref{kuramoto}) with the following initial
conditions: for each $i=1,\ldots,k$ and each $p\in S_{i}$, we take $\theta
_{p}(0)=c_{i}.$ We construct a solution to this system as follows: for each
$p=1,\ldots,n$, with $p\in S_{i},$ set $\theta_{p}(t)=f_{i}(t).$ It is readily
verified that this is yields a solution to (\ref{kuramoto}) with the given
initial conditions, and again using Picard's theorem, we observe that such a
solution is unique. Evidently, for this solution, we have $\theta_{p}%
=\theta_{q}$ if $p,q\in S_{l}$, for some $l$, and $\theta_{p}\neq\theta_{q}$
if $p\in S_{l},q\in S_{m}$ and $l\neq m$.
\end{proof}

\bigskip

An \emph{automorphism} of a graph $G$ is a permutation $\pi$ of $V(G)$ with
the property that, for any two vertices $i,j\in V(G)$, we have $\{i,j\}\in
E(G)$ if and only if $\{\pi(i),\pi(j)\}\in E(G)$. Two vertices are
\emph{symmetric} if there exists an automorphism which maps one vertex to the
other. The equivalence classes consisting of symmetric vertices are called the
\emph{orbits} of the graph by $\pi$.

\begin{definition}
[Orbit partition]A partition $\mathcal{S}$ of a graph $G$ with parts
$S_{1},...,S_{k}$ is an \emph{orbit partition }if there is an automorphism of
$G$ with orbits $S_{1},...,S_{k}$.
\end{definition}

The following is an easy fact:

\begin{proposition}
\label{orbit}An orbit partition is an equitable partition. The converse is not
necessarily true.
\end{proposition}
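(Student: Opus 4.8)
The plan is to prove two claims: first that every orbit partition is equitable, and second that the converse fails by exhibiting a single counterexample.

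For the forward direction, suppose $\mathcal{S}$ is an orbit partition with parts $S_1,\ldots,S_k$, witnessed by an automorphism $\pi$ of $G$. I need to show that for any two parts $S_i, S_j$, every vertex $v \in S_i$ has the same number of neighbours in $S_j$. First I would fix $v, w \in S_i$ lying in the same orbit; since $\pi$ acts transitively on each orbit, there is some power $\pi^m$ with $\pi^m(v) = w$, and $\sigma := \pi^m$ is itself an automorphism preserving every orbit setwise (because orbits of $\pi$ are unions of orbits of its powers, but in fact each orbit of $\pi$ is a single $\sigma$-orbit here — more cleanly, any power of $\pi$ permutes the orbits of $\pi$ among themselves, and maps $S_i$ to itself since $v,w\in S_i$). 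The key step is then that $\sigma$ restricts to a bijection from the neighbours of $v$ in $S_j$ onto the neighbours of $w$ in $S_j$: if $u \in S_j$ is adjacent to $v$, then $\{v,u\}\in E(G)$ implies $\{\sigma(v),\sigma(u)\} = \{w,\sigma(u)\}\in E(G)$, and $\sigma(u)\in S_j$ because $\sigma$ preserves the part $S_j$. This bijection gives equality of neighbour counts, establishing that $\gamma_{ij}$ is well-defined, hence $\mathcal{S}$ is equitable.

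The one technical point to handle carefully is \emph{why} an arbitrary power $\sigma = \pi^m$ maps $S_j$ to itself for every $j$, not just $S_i$. Since the parts are the orbits of the cyclic group $\langle\pi\rangle$, each orbit is already invariant under every element of $\langle\pi\rangle$, so every power of $\pi$ fixes each $S_j$ setwise; this is immediate from the definition of an orbit, so it is a minor point rather than a real obstacle.

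For the converse, I would simply exhibit an equitable partition that is not an orbit partition. The cleanest witness is a graph with a trivial automorphism group but a nontrivial equitable partition — for instance, a small asymmetric graph together with the partition by degree, or more concretely a path $P_n$ on $n\geq 2$ vertices whose partition into its two endpoints together with the interior vertices (or the coarser structure) is equitable yet cannot arise from an automorphism once the graph is rigid. I would pick a specific small graph, verify directly that the chosen partition has constant neighbour counts between parts (so it is equitable), and then check that no nonidentity automorphism has the prescribed orbits, either because the automorphism group is trivial or because its orbits differ from $\mathcal{S}$. I expect the \emph{main obstacle} to be purely expository: choosing a counterexample small enough to verify by inspection while ensuring that its automorphism group genuinely cannot produce the given partition as an orbit partition; the forward implication itself is routine once the invariance of orbits under powers of $\pi$ is noted.
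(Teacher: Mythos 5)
The paper offers no proof of this proposition at all --- it is labelled an ``easy fact'' and left to the reader (it is standard material, cf.\ the Godsil--Royle reference) --- so the comparison here is with the standard argument. Your forward direction is correct and is exactly that standard argument: since the parts are the orbits of the cyclic group $\langle\pi\rangle$, each part is setwise invariant under every power of $\pi$, and an automorphism $\pi^m$ carrying $v$ to $w$ restricts to a bijection from the neighbours of $v$ in $S_j$ onto the neighbours of $w$ in $S_j$. The technical point you flag (invariance of every part under $\sigma=\pi^m$, not just of $S_i$) is handled correctly. No complaints there.

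The gap is in the converse. Your strategy --- exhibit an equitable partition that is not an orbit partition --- is the right one, but neither concrete witness you float actually works, and the proposal never commits to one that does. A path $P_n$ is never rigid: the reversal is a nonidentity automorphism. Moreover, the partition of $P_n$ into endpoints and interior vertices is equitable only for $n\le 4$, and in those cases it coincides exactly with the orbits of the reversal, so it \emph{is} an orbit partition; for $n\ge 5$ it is not even equitable (in $P_5$ the centre vertex has $0$ neighbours among the endpoints while its two neighbours have $1$ each). Likewise, the ``partition by degree'' of an asymmetric graph need not be equitable, so that suggestion also fails as stated. A witness that does work: take any \emph{regular} graph $G$ with trivial automorphism group (e.g.\ the Frucht graph, which is cubic and asymmetric) together with the one-part partition $\{V(G)\}$. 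Regularity makes this partition equitable, while the only automorphism of $G$ is the identity, whose orbits are the singletons; hence the only orbit partition of $G$ is the partition into singletons, and $\{V(G)\}$ is not an orbit partition. Until a verified example of this kind is supplied, the second sentence of the proposition remains unproven in your write-up.
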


Because of this, we have the following intuitive fact.

\begin{proposition}
Not every $\alpha$-Kuramoto partition is an orbit partition.
\end{proposition}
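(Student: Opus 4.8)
The plan is to deduce the statement directly from the two preceding results rather than reanalysing the Kuramoto dynamics from scratch. By Theorem~\ref{equi}, every equitable partition of a connected graph is an $\alpha$-Kuramoto partition for every $\alpha\in(0,\pi/2)$; by Proposition~\ref{orbit}, every orbit partition is equitable, but there exist equitable partitions that are not orbit partitions. Chaining these two facts yields the result: any equitable partition that fails to be an orbit partition is, by Theorem~\ref{equi}, an $\alpha$-Kuramoto partition, and so it witnesses that the class of $\alpha$-Kuramoto partitions is strictly larger than the class of orbit partitions.

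Concretely, first I would invoke Proposition~\ref{orbit} to select a connected graph $G$ together with an equitable partition $\mathcal{S}$ of $G$ that is not an orbit partition. Second, I would apply Theorem~\ref{equi} to the pair $(G,\mathcal{S})$ to conclude that $\mathcal{S}$ is an $\alpha$-Kuramoto partition of $G$. Third, I would observe that $\mathcal{S}$ is by construction not an orbit partition, so the implication ``$\alpha$-Kuramoto partition $\Rightarrow$ orbit partition'' fails on this example, which is exactly what the proposition asserts.

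To keep the argument self-contained, I would supply an explicit witness rather than relying only on the existence assertion embedded in Proposition~\ref{orbit}. A convenient choice is a connected $d$-regular graph $G$ whose automorphism group is trivial, such as the Frucht graph. For such a graph the single-part partition $\{V(G)\}$ is equitable, since regularity makes $\gamma_{1,1}=d$ well defined, yet the only orbit partition available is the discrete partition into singletons, because the identity is the sole automorphism. Thus $\{V(G)\}$ is equitable but not an orbit partition, and Theorem~\ref{equi} upgrades it to an $\alpha$-Kuramoto partition.

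I expect no genuine obstacle here: the substance of the proposition resides entirely in the two results already proved, and the only choice to be made is which equitable-but-not-orbit partition to exhibit. The mildest care required is to ensure the chosen graph is connected, as demanded by Theorem~\ref{equi}, and to confirm that its automorphism group really is trivial, so that no nontrivial orbit partition coincides with $\{V(G)\}$.
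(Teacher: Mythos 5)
Your proof is correct and follows the same basic strategy as the paper's: exhibit an equitable partition that is not an orbit partition, and promote it to an $\alpha$-Kuramoto partition via Theorem~\ref{equi}. The paper instantiates this with the bipartition of the graph in Figure~\ref{fig1} (vertices $1$ and $3$ lie in the same part, yet no automorphism maps $1$ to $3$), whereas you use the single-part partition of a connected regular graph with trivial automorphism group (the Frucht graph); your witness has the minor advantage that failure to be an orbit partition is immediate from the triviality of the automorphism group, rather than requiring a graph-specific check that no automorphism sends one named vertex to another.
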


The proof is by counterexample. The graph in Figure \ref{fig1} does not have
an automorphism $\phi$ such that $\phi(1)=3$, even though vertices $1$ and $3$
are in the same part of an $\alpha$-Kuramoto partition.

\begin{remark}
\emph{A graph is said to be }$d$\emph{-}regular\emph{\ if the degree of each
vertex is }$d$\emph{. Phase synchronisation for a }$d$\emph{-regular graph is
special. In fact, one }$\alpha$\emph{-Kuramoto partition of a }$d$%
\emph{-regular graph is the singleton partition (}i.e.\emph{, it has only one
part). To show this, we need to prove that if }$G$\emph{\ is a }%
$d$\emph{-regular graph then }$\theta_{i}(t)=\theta_{j}(t)$\emph{, for all
}$i,j\in V(G)$\emph{. Fix }$\alpha$\emph{\ and let }$\theta_{i}(t)=-d\sin
(\alpha)t $, \emph{for every }$i\in V(G)$\emph{. It is now straightforward to
verify that }$\theta_{i}(t)$\emph{\ is a solution to the model.}
\end{remark}

\section{$\alpha$-Kuramoto bipartitions}

An \emph{equitable bipartition} (resp. $\alpha$\emph{-Kuramoto bipartition})
is an equitable partition ($\alpha$-Kuramoto partition) with exactly two
parts. The relationship between equitable bipartitions and $\alpha$-Kuramoto
bipartitions is special. We observe the main property of this relationship in
Theorem \ref{asymp}. First, we need a technical lemma whose proof is elementary.

\begin{lemma}
\label{tech}Suppose that $f(x)$ is a differentiable function on $[0,\infty)$
and that $f^{\prime}(x)$ is uniformly continuous on $[0,\infty)$. If
$f(x)\rightarrow0$ as $x\rightarrow\infty$, then $f^{\prime}(x)\rightarrow0$
as $x\rightarrow\infty$.
\end{lemma}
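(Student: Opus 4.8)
The plan is to prove the contrapositive: assume $f'(x) \not\to 0$ and derive that $f(x) \not\to 0$. If $f'(x)$ does not tend to $0$, then there is some $\varepsilon > 0$ and a sequence $x_n \to \infty$ with $|f'(x_n)| \ge \varepsilon$ for all $n$. By passing to a subsequence I may assume the $x_n$ are spaced far apart, say $x_{n+1} > x_n + 1$, so that the intervals I build around them are disjoint.

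The key step is to use uniform continuity of $f'$ to control $f'$ on a whole interval around each $x_n$, not just at the point $x_n$ itself. Given $\varepsilon$, uniform continuity supplies a $\delta > 0$ (independent of $n$) such that $|f'(s) - f'(t)| < \varepsilon/2$ whenever $|s-t| < \delta$. Hence on the interval $I_n = [x_n - \delta, x_n + \delta]$ (intersected with $[0,\infty)$, which for large $n$ is the full interval), we have $|f'(t)| \ge |f'(x_n)| - \varepsilon/2 \ge \varepsilon/2$. Since $f'$ is continuous it cannot change sign on $I_n$ without vanishing, and vanishing is impossible there; so $f'$ has constant sign on each $I_n$ and $|f'| \ge \varepsilon/2$ throughout.

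Now I would integrate. By the Fundamental Theorem of Calculus,
\begin{equation}
|f(x_n + \delta) - f(x_n - \delta)| = \left| \int_{x_n - \delta}^{x_n + \delta} f'(t)\, dt \right| = \int_{x_n - \delta}^{x_n + \delta} |f'(t)|\, dt \ge 2\delta \cdot \frac{\varepsilon}{2} = \delta\varepsilon,
\end{equation}
where the middle equality uses that $f'$ keeps constant sign on $I_n$. Thus the oscillation of $f$ across each $I_n$ is bounded below by the fixed positive quantity $\delta\varepsilon$, uniformly in $n$. This forces $\max(|f(x_n-\delta)|, |f(x_n+\delta)|) \ge \delta\varepsilon/2$ for every $n$, and since $x_n \to \infty$ the arguments $x_n \pm \delta$ also tend to infinity, so $f(x)$ cannot converge to $0$. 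This contradicts the hypothesis $f(x) \to 0$, completing the contrapositive.

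The main obstacle is not any single inequality but ensuring the constants are chosen in the right order and are genuinely uniform in $n$: the $\delta$ from uniform continuity must be selected once and for all from $\varepsilon$ before the intervals $I_n$ are defined, and one must confirm that $f'$ does not vanish on $I_n$ (so the integral of $|f'|$ equals the absolute value of the integral of $f'$). I would also take care with the boundary at $0$ by simply discarding the finitely many $x_n$ with $x_n < \delta$, which is harmless since we only need the tail behaviour as $x \to \infty$.
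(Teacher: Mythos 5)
Your proof is correct and is essentially the paper's argument: both proceed by contradiction, use uniform continuity to bound $|f'|$ below by a fixed constant on intervals of fixed width around the points $x_n$, and conclude that $f$ must change by a definite amount across each such interval, contradicting $f(x)\rightarrow 0$. The only cosmetic difference is that you integrate $f'$ (after a constant-sign observation) where the paper invokes the mean value theorem, and you keep absolute values where the paper reduces to the case $f'(x_j)\geq c$ without loss of generality.
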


\begin{proof}
Suppose to the contrary that $f^{\prime}(x)$ does not approach $0$ as
$x\rightarrow\infty.$ Then there is a sequence $x_{j}$ diverging to infinity,
and $c>0$ such that $|f^{\prime}(x_{j})|\geq c$, for all $j\in\mathbb{N}$.
Without loss of generality we assume that $f^{\prime}(x_{j})\geq c$ for
$j\in\mathbb{N}$.

Since $f^{\prime}$ is uniformly continuous, there is $h>0$ such that for each
$j\in\mathbb{N}$ all $x\in\lbrack x_{j}-h,x_{j}+h]$ and $f^{\prime}(x)\geq
c/2$. Applying the mean value theorem to $f$, we see that for each
$j\in\mathbb{N}$, there is a $z\in\lbrack x_{j}-h,x_{j}+h]$ such that
\[
f(x_{j}+h)=f(x_{j})+hf^{\prime}(z)\geq f(x_{j})+\frac{ch}{2}.
\]
But then for all sufficiently large integers $j$, we have $f(x_{j}+h)\geq
ch/4$, contrary to hypothesis. The conclusion now follows.
\end{proof}

\bigskip

We are now ready to state the main result of this section.

\begin{theorem}
\label{asymp} Let $G$ be a connected graph on $n$ vertices. Suppose that
$1\leq k\leq n-1$ and define $S_{1}=\{1,\ldots,k\},$ $S_{2}=\{k+1,\ldots,n\}
$, and ${\mathcal{S}}=S_{1}\cup S_{2}$. For each $i=1,\ldots,n$, let
$\delta_{i,1}$ be the number of neighbours of vertex $i$ in $S_{1}$, and let
$\delta_{i,2}$ be the number of neighbours of vertex $i$ in $S_{2}$. Suppose
that $\alpha\in(0,\pi/2)$ and $\theta_{j}$ ($j=1,\ldots,n$) is a solution to
the frustrated Kuramoto model. Suppose further that:

\begin{itemize}
\item for all $i,j\in S_{1}$, we have $\theta_{i}(t)-\theta_{j}(t)\rightarrow
0$ as $t\rightarrow\infty$;

\item for all $p,q\in S_{2}$, we have $\theta_{p}(t)-\theta_{q}(t)\rightarrow
0$ as $t\rightarrow\infty$;

\item for some $i\in S_{1}$ and $p\in S_{2}$, $\theta_{i}(t)-\theta_{p}(t)$
does not converge to $0$ as $t\rightarrow\infty$.
\end{itemize}

Then one of the following holds:

\begin{enumerate}
\item the partition ${\mathcal{S}}$ is an equitable partition of $G$;

\item there are scalars $\mu_{1},\mu_{2}$, and $r$ such that%
\[
-\delta_{i,1}+\mu_{1}\delta_{i,2}=r\text{\qquad for all }i\in S_{1},
\]
and%
\[
-\delta_{j,2}+\mu_{2}\delta_{j,1}=r\text{\qquad for all }j\in S_{2}.
\]

\end{enumerate}
\end{theorem}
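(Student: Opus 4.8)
The plan is to translate the three asymptotic hypotheses into pointwise statements about the derivatives $\theta_i'$, and then to read off the structure of $\mathcal{S}$ from the limiting form of the right-hand side of (\ref{kuramoto}). First I would record that each $\theta_i'(t)$ is bounded, being a sum of at most $n$ sine terms, and that each $\theta_i''(t)$ is therefore also bounded (differentiate (\ref{kuramoto}) and use that every $\theta_j'$ is bounded). Hence each $\theta_i'$ is Lipschitz, so each difference $\theta_i'-\theta_j'$ is uniformly continuous. Applying Lemma \ref{tech} to $\theta_i-\theta_j$ for $i,j$ in a common part — its derivative is uniformly continuous and it tends to $0$ — gives $\theta_i'(t)-\theta_j'(t)\to 0$ whenever $i,j\in S_1$ or $i,j\in S_2$.

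Next I would fix reference vertices $a\in S_1$, $b\in S_2$ and set $D(t)=\theta_b(t)-\theta_a(t)$. Since the two parts each synchronise asymptotically, $\theta_p(t)-\theta_i(t)=D(t)+o(1)$ for all $p\in S_2$, $i\in S_1$. Substituting into (\ref{kuramoto}) and splitting each neighbourhood according to $S_1$ and $S_2$ gives, as $t\to\infty$,
\[ \theta_i'(t)=-\delta_{i,1}\sin\alpha+\delta_{i,2}\sin(D(t)-\alpha)+o(1),\quad i\in S_1, \]
together with $\theta_j'(t)=-\delta_{j,2}\sin\alpha-\delta_{j,1}\sin(D(t)+\alpha)+o(1)$ for $j\in S_2$. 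Feeding these into $\theta_i'-\theta_{i'}'\to0$ and $\theta_j'-\theta_{j'}'\to0$ yields the key identities
\[ (\delta_{i,2}-\delta_{i',2})\sin(D(t)-\alpha)\to(\delta_{i,1}-\delta_{i',1})\sin\alpha,\quad i,i'\in S_1, \]
and $(\delta_{j,1}-\delta_{j',1})\sin(D(t)+\alpha)\to-(\delta_{j,2}-\delta_{j',2})\sin\alpha$ on $S_2$. Since $\alpha\in(0,\pi/2)$, we have $\sin\alpha\neq0$ throughout, which will be used repeatedly.

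I would then split into cases according to whether the cross-degrees are constant on each part. If $\delta_{i,2}$ is constant over $S_1$, the left-hand side above vanishes, forcing $\delta_{i,1}$ to be constant over $S_1$ too; the same implication holds on $S_2$. Hence if both $\delta_{i,2}|_{S_1}$ and $\delta_{j,1}|_{S_2}$ are constant, all four cross-degrees are constant and $\mathcal{S}$ is equitable, giving conclusion (1). If instead $\delta_{i,2}$ is not constant over $S_1$, the first identity shows $\sin(D(t)-\alpha)$ converges to some $u_\infty$; dividing the limiting relation by $-\sin\alpha$ gives $-\delta_{i,1}+\mu_1\delta_{i,2}=r_1$ on $S_1$ with $\mu_1=u_\infty/\sin\alpha$, and symmetrically one obtains $-\delta_{j,2}+\mu_2\delta_{j,1}=r_2$ on $S_2$ when $\delta_{j,1}$ is not constant.

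The remaining work, and the main obstacle, is to reconcile these into a single constant $r$, since the two parts a priori produce independent $r_1,r_2$. In the mixed case (say $\delta_{i,2}$ varies on $S_1$ while $\delta_{j,1}\equiv\gamma_{21}$, $\delta_{j,2}\equiv\gamma_{22}$ on $S_2$), connectedness of $G$ forces at least one $S_1$–$S_2$ edge, so $\gamma_{21}\geq 1>0$, and I can simply choose $\mu_2=(r_1+\gamma_{22})/\gamma_{21}$ so that $-\gamma_{22}+\mu_2\gamma_{21}=r_1$, yielding conclusion (2). In the doubly-varying case both $\sin(D-\alpha)$ and $\sin(D+\alpha)$ converge, so $\theta_a'$ and $\theta_b'$ have limits and $D'(t)\to L$; if $L\neq0$ then $D(t)\to\pm\infty$, forcing $\sin(D(t)-\alpha)$ to oscillate and contradicting its convergence, so $L=0$, and writing out $L=0$ and dividing by $-\sin\alpha$ gives exactly $r_1=r_2$. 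Thus the device is the vanishing of $\lim_{t\to\infty}D'(t)$ (in the spirit of Lemma \ref{tech}) in the doubly-varying case, and the freedom in $\mu_2$ afforded by connectedness in the mixed case. Finally, the hypothesis that $\theta_i-\theta_p\not\to0$ for some cross pair guarantees $D(t)\not\to0$, ensuring the configuration is a genuine bipartition rather than global synchronisation.
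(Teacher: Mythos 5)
Your proof is correct, and it assembles the conclusion by a genuinely different route than the paper. The paper replaces the individual phases by the part averages $\lambda_1=\frac{1}{k}\sum_{i\in S_1}\theta_i$, $\lambda_2=\frac{1}{n-k}\sum_{j\in S_2}\theta_j$ and deviations $\epsilon_p\to 0$, applies Lemma \ref{tech} to the $\epsilon_p$, and rewrites the model so that every equation for $i\in S_1$ has the \emph{same} left-hand side $\lambda_1'$ (equation (\ref{etai})), and likewise $\lambda_2'$ for $j\in S_2$ (equation (\ref{etaj})). Its case split is then analytic: if $\lambda_1-\lambda_2$ does not converge to a constant, any variation of cross-degrees within a part would force $\sin(\lambda_2-\lambda_1-\alpha)$, and hence $\lambda_2-\lambda_1$ itself (via the arcsin branches), to converge --- a contradiction --- so $\mathcal{S}$ is equitable; if instead $\lambda_1-\lambda_2\to\alpha+\beta$, then $\lambda_1'-\lambda_2'\to 0$ and a single constant $r$, with the explicit values (\ref{mu12}) of $\mu_1,\mu_2$, drops out at once precisely because the left-hand sides are shared. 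You work instead with differences against reference vertices and split combinatorially on whether the cross-degrees are constant on each part, which obliges you to reconcile two a priori independent constants $r_1,r_2$; your two devices for this --- choosing $\mu_2$ freely when one part has constant cross-degrees (legitimate, since condition 2 only asserts existence of scalars, and connectedness gives $\gamma_{21}\geq 1$), and ruling out $D'\to L\neq 0$ because then $D\to\pm\infty$ would make $\sin(D-\alpha)$ oscillate against its established convergence --- are both sound. Your route buys a more elementary argument: you never need the paper's (implicit, slightly delicate) step that convergence of the sine of a continuous function pins down the function's limit, and you also justify explicitly, via bounded second derivatives, the uniform continuity of the $\theta_i'$ that the paper merely asserts. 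What the paper's averaging buys is that the single $r$ comes structurally for free, together with the closed forms $\mu_1=\sin(\beta)/\sin(\alpha)$, $\mu_2=-\sin(2\alpha+\beta)/\sin(\alpha)$ that its subsequent remarks and examples (the formulas for $\alpha$ and $\beta$, and the explicit synchronised solutions) depend on.
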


\begin{proof}
Set $\lambda_{1}(t)=\frac{1}{k}\sum_{i=1}^{k}\theta_{i}(t)$ and $\lambda
_{2}(t)=\frac{1}{n-k}\sum_{j=k+1}^{n}\theta_{j}(t)$. For each $i\in S_{1},$
let $\epsilon_{i}=\theta_{i}-\lambda_{1}$, and for each $j\in S_{2}$, let
$\epsilon_{j}=\theta_{j}-\lambda_{2}.$ From our hypotheses, for each
$p=1,\ldots,n$, $\epsilon_{p}(t)\rightarrow0$ as $t\rightarrow\infty,$ while
$\lambda_{1}(t)-\lambda_{2}(t)$ does not converge to $0$ as $t\rightarrow
\infty.$ Observe that each $\theta_{i}$ is differentiable with uniformly
continuous derivative, and hence the same is true of $\lambda_{1},\lambda
_{2},$ and each of $\epsilon_{1},\ldots,\epsilon_{n}.$ In particular, applying
Lemma \ref{tech} to each $\epsilon_{p},$ we find that as $t\rightarrow\infty,$
$\epsilon_{p}^{\prime}(t)\rightarrow0,$ for $p=1,\ldots,n.$

For each $i\in S_{1},$ we have
\[
\lambda_{1}^{\prime}+\epsilon_{i}^{\prime}=\sum_{j\sim i,j\in S_{1}}%
\sin(\epsilon_{j}-\epsilon_{i}-\alpha)+\sum_{j\sim i,j\in S_{2}}\sin
(\lambda_{2}-\lambda_{1}-\alpha+\epsilon_{j}-\epsilon_{i})
\]
(here we use $j \sim i$ to denote the fact that vertices $j$ and $i$ are
adjacent). Now, we rewrite this as
\begin{equation}
\lambda_{1}^{\prime}=-\delta_{i,1}\sin(\alpha)+\delta_{i,2}\sin(\lambda
_{2}-\lambda_{1}-\alpha)+\eta_{i}. \label{etai}%
\end{equation}
Observe that since $\epsilon_{p}(t),\epsilon_{p}^{\prime}(t)\rightarrow0$ as
$t\rightarrow\infty$, for $p=1,\ldots,n$, it follows that $\eta_{i}%
(t)\rightarrow0$ as $t\rightarrow\infty.$ Similarly, we find that for each
$j\in S_{2},$
\begin{equation}
\lambda_{2}^{\prime}=\delta_{j,1}\sin(\lambda_{1}-\lambda_{2}-\alpha
)-\delta_{j,2}\sin(\alpha)+\eta_{j}(t), \label{etaj}%
\end{equation}
and that for each $j\in S_{2}$, $\eta_{j}(t)\rightarrow0$ as $t\rightarrow
\infty.$

First, suppose that $\lambda_{1}(t)-\lambda_{2}(t)$ does not converge to a
constant as $t\rightarrow\infty.$ Fix two indices $p,q\in S_{1},$ and note
from (\ref{etai}) that
\[
(\delta_{p,1}-\delta_{q,2})\sin(\alpha)+\eta_{p}-\eta_{q}=(\delta_{q,2}%
-\delta_{p,2})\sin(\lambda_{2}-\lambda_{1}-\alpha).
\]
If $\delta_{q,2}-\delta_{p,2}\neq0,$ we find that as $t\rightarrow\infty,$ for
some $k\in\mathbb{N},$ either
\[
\lambda_{2}-\lambda_{1}\rightarrow\arcsin\left(  \frac{\delta_{q,1}%
-\delta_{p,1}}{\delta_{q,2}-\delta_{p,2}}\sin(\alpha)\right)  +\alpha+2\pi k
\]
or
\[
\lambda_{2}-\lambda_{1}\rightarrow+\alpha+(2k+1)\pi-\arcsin\left(
\frac{\delta_{q,1}-\delta_{p,1}}{\delta_{q,2}-\delta_{p,2}}\sin(\alpha
)\right)  ,
\]
both of which are contrary to our assumption. Hence it must be the case that
$\delta_{q,2}=\delta_{p,2}$ which immediately yields that $\delta_{q,1}%
=\delta_{p,1}$. A similar argument applies for any pair of indices $p,q\in
S_{2},$ and we deduce that ${\mathcal{S}}$ is an equitable partition.

Now we suppose that for some constant $\beta$, $\lambda_{1}(t)-\lambda
_{2}(t)\rightarrow\alpha+\beta$ as $t\rightarrow\infty.$ Note that then we
also have $\lambda_{1}^{\prime}(t)-\lambda_{2}^{\prime}(t)\rightarrow0$ as
$t\rightarrow\infty.$ Fix indices $p\in S_{1},q\in S_{2}.$ From (\ref{etai})
and (\ref{etaj}), we find, upon letting $t\rightarrow\infty$ that
\[
-\delta_{p,1}\sin(\alpha)+\delta_{p,2}\sin(\beta)=-\delta_{q,2}\sin
(\alpha)+\delta_{q,1}(-\sin(2\alpha+\beta)).
\]
Letting
\begin{equation}
\mu_{1}=\dfrac{\sin(\beta)}{\sin(\alpha)}\text{ and }\mu_{2}=\dfrac
{-\sin(2\alpha+\beta)}{\sin(\alpha)}, \label{mu12}%
\end{equation}
it now follows that there is a scalar $r$ such that, for all $i\in S_{1}$ and
$j\in S_{2}$,%
\[
-\delta_{i,1}+\mu_{1}\delta_{i,2}=r=-\delta_{j,2}+\mu_{2}\delta_{j,1}.
\]

\end{proof}

\begin{remark}
\emph{Suppose that condition 2 of Theorem \ref{asymp} holds. From (\ref{mu12})
in the proof of Theorem \ref{asymp}, it is straightforward to verify that
}$\mu_{1}+\mu_{2}=-2\cos(\alpha+\beta)$, \emph{so that necessarily }$|\mu
_{1}+\mu_{2}|\leq2$\emph{.\ In particular, for some }$k\in N,$\emph{\ }%
\begin{equation}
\beta=\arccos\left(  -\frac{\mu_{1}+\mu_{2}}{2}\right)  -\alpha+2\pi k.
\label{beform}%
\end{equation}
\emph{Observe also that }%
\begin{align*}
\mu_{2}  &  =\frac{-\sin(\alpha)\cos(\alpha+\beta)-\cos(\alpha)\sin
(\alpha+\beta)}{\sin(\alpha)}\\
&  =\frac{\mu_{1}+\mu_{2}}{2}-\cot(\alpha)\sin(\alpha+\beta)\\
&  =\frac{\mu_{1}+\mu_{2}}{2}-\cot(\alpha)\sqrt{1-\left(  \frac{\mu_{1}%
+\mu_{2}}{2}\right)  ^{2}}.
\end{align*}
\emph{It now follows that }%
\begin{equation}
\alpha=\arctan\left(  \frac{\sqrt{4-(\mu_{1}+\mu_{2})^{2}}}{\mu_{1}-\mu_{2}%
}\right)  . \label{alform}%
\end{equation}
\emph{Moreover, since }$\alpha\in(0,\pi/2),$\emph{\ in fact we must have
}$|\mu_{1}+\mu_{2}|<2$\emph{\ and }$\mu_{1}>\mu_{2}.$\emph{\ }
\end{remark}

\begin{remark}
\emph{Observe that if a graph }$G$\emph{\ happens to satisfy condition 2 of
Theorem \ref{asymp}, and if }$\mathcal{S}$\emph{\ is not an equitable
partition, then the parameters }$\mu_{1},\mu_{2}$\emph{\ and }$r$\emph{\ are
readily determined, as follows. Suppose without loss of generality that
}$|S_{1}|\geq2$\emph{\ and that }$\delta_{i,2}\neq\delta_{j,2}$\emph{\ for
some }$i,j\in S_{1}.$\emph{\ Then we find that }$\mu_{1}=(\delta_{j,1}%
-\delta_{i,1})/(\delta_{j,2}-\delta_{i,2})$,$\emph{\ }$\emph{from which we may
find }$r$\emph{\ easily. Since }$G$\emph{\ is connected, }$\delta_{j,1}%
$\emph{\ is nonzero for some }$j\in S_{2},$\emph{\ so that }$\mu_{2}%
$\emph{\ is then also easily determined. Since }$\mu_{1},\mu_{2}$\emph{\ are
determined by }$G$\emph{, so are }$\alpha$\emph{\ and }$\beta.$\emph{\ Suppose
now that the parameters }$\mu_{1},\mu_{2}$\emph{\ so determined are such that
}$|\mu_{1}+\mu_{2}|<2$\emph{\ and }$\mu_{1}>\mu_{2},$\emph{\ and let }%
$\alpha,\beta$\emph{\ be given by (\ref{alform}) and (\ref{beform}),
respectively. Select a scalar }$c,$\emph{\ and set initial conditions }%
$\theta_{i}(0)=c$,\emph{\ for }$i\in S_{1}$\emph{, and }$\theta_{j}%
(0)=c+\alpha+\beta$\emph{, for }$j\in S_{2}$. \emph{Then the solution to the
frustrated Kuramoto model is readily seen to be }$\theta_{i}(t)=c+r\sin
(\alpha)t$\emph{\ (}$i\in S_{1}$\emph{) and }$\theta_{j}(t)=c+r\sin
(\alpha)t+\alpha+\beta$\emph{\ (}$j\in S_{2}$\emph{). In particular, we find
that if $\mathcal{S}$ is not an equitable bipartition, but satisfies condition
2 of Theorem \ref{asymp}, then $\mathcal{S}$ is an $\alpha$--Kuramoto
bipartition for $\alpha$ given by (\ref{alform}).}
\end{remark}

\begin{example}
\textit{\label{kura_eg}}\emph{Consider the graph }$G$\emph{\ in Figure
\ref{fig2}. Let }$S_{1}=\{1,\ldots,5\}$\emph{\ and }$S_{2}=\{6,\ldots
,10\}$.\emph{\ It is readily determined that }$G$\emph{\ satisfies condition 2
of Theorem \ref{asymp}, where our parameters are }$\mu_{1}=1/2$,\emph{\ }%
$\mu_{2}=1/2$\emph{\ and }$r=0.$\emph{\ We find that }$\alpha=\pi/2$
and\emph{\ }$\beta=\pi/6+2\pi k$\emph{\ (for some }$k\in\mathbb{N}$\emph{),
and that for an initial condition of the form }$\theta_{i}(0)=c$%
\emph{\ (}$i\in S_{1}$\emph{) and }$\theta_{j}(0)=c+2\pi/3+2\pi k$%
\emph{\ (}$j\in S_{2}$\emph{) the solution to (\ref{kuramoto}) is constant for
all }$t\geq0$. \emph{\ In particular, extending Definition \ref{kura-def}
slightly to include the case that $\alpha=\pi/2,$ we see that $S_{1}\cup
S_{2}$ can be thought of as a $\pi/2$--Kuramoto bipartition of $G$. } \emph{\
\begin{figure}[h]%
\centering
\includegraphics[
height=2.5382in,
width=2.7276in
]%
{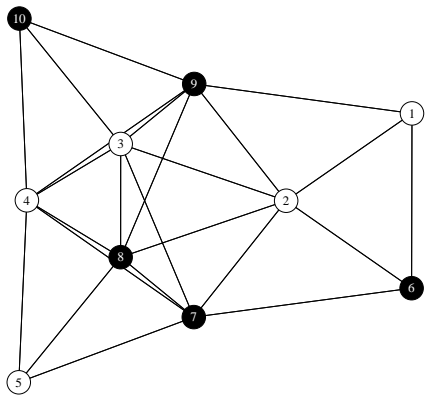}%
\caption{A graph $G$ with a partition of $V(G)$ into $\{1,\ldots,5\}$%
\emph{\ }and $\{6,\ldots,10\}$ satisfying condition 2 in Theorem \ref{asymp}.}%
\label{fig2}%
\end{figure}
}
\end{example}

\begin{example}
\label{linear}\emph{Suppose that }$p\geq4$\emph{\ is even and consider the
following adjacency matrix }%
\[
\left[
\begin{array}
[c]{c|c|c}%
0 & \mathbf{1}_{p}^{T} & \mathbf{0}_{p}^{T}\\\hline
\mathbf{1}_{p} & \mathbf{0}_{p\times p} & I_{p}\\\hline
0_{p} & I_{p} & B
\end{array}
\right]  ,
\]
\emph{where }$0_{p}$\emph{\ and }$1_{p}$\emph{\ are the zero and all ones
vector in }$\mathbb{R}^{p}$,\emph{\ respectively, }$\mathbf{0}_{p\times p}%
$\emph{\ and }$I_{p}$\emph{\ are the zero and identity matrices of order }%
$p$,\emph{\ respectively, and }$B$\emph{\ is the adjacency matrix of a graph
consisting of }$p/2$\emph{\ independent edges. Consider the partition
}$S=\{1\}\cup\{2,\ldots,2p+1\}$.\emph{\ This partition yields the following
parameters: }$\delta_{1,1}=0$\emph{, }$\delta_{1,2}=p$\emph{, }$\delta
_{i,1}=1$\emph{\ (}$i=2,\ldots,p+1$\emph{), }$\delta_{i,1}=0$\emph{\ (}%
$i=p+2,\ldots,2p+1$\emph{), }$\delta_{i,2}=1$\emph{\ (}$i=2,\ldots
,p+1$\emph{), }$\delta_{i,2}=2$\emph{\ (}$i=p+2,2p+1$\emph{). It now follows
that the graph satisfies condition 2 of Theorem \ref{asymp}, with parameters
}$\mu_{1}=-2/p$\emph{, }$\mu_{2}=-1$\emph{, and }$r=-2$.\emph{\ }

\emph{Now, take }%
\[
\alpha=\arctan\left(  \frac{\sqrt{3p^{2}-4p-4}}{p-2}\right)  \text{\emph{\ and
}}\beta=\arctan\left(  \frac{p+2}{2p}\right)  -\alpha.
\]
\emph{Setting up the initial condition}%
\begin{align*}
\theta_{1}(0)  &  =0,\\
\theta_{j}(0)  &  =\arccos\left(  \frac{p+2}{2p}\right)  ,\text{\qquad
}j=2,\ldots,2p+1,
\end{align*}
\emph{it follows that the solution to (\ref{kuramoto}) is given by }%
\begin{align*}
\theta_{1}(t)  &  =p\sin(\beta)t,\\
\theta_{j}(t)  &  =\arccos\left(  \frac{p+2}{2p}\right)  +p\sin(\beta
)t,\text{\qquad}j=2,\ldots,2p+1.
\end{align*}

\end{example}

\begin{example}
\emph{\label{latoro}Here we revisit the graph of Figure 1a) in
\cite{nicosia12}. Take }$S_{1}=\{1\}$\emph{\ and }$S_{2}=\{2,\ldots
,7\}$.\emph{\ With this partition, it is straightforward to verify that the
graph and partition satisfies condition 2 of Theorem \ref{asymp}, with
parameters }$\mu_{1}=-1/2$, $\mu_{2}=-1$\emph{, and }$r=-2$.\emph{\ It now
follows that for }$\alpha=\arctan(\sqrt{7})$\emph{\ and }$\beta=\arccos\left(
3/4\right)  $\emph{\ and initial condition }$\theta_{1}(0)=0$ \emph{and
}$\theta_{j}(0)=\alpha+\beta$\emph{\ (}$j=2,\ldots,7$\emph{), the solution to
(\ref{kuramoto}) satisfies }$\theta_{1}(t)=-2t$\emph{\ and }$\theta
_{j}(t)=\alpha+\beta-2t$\emph{\ (}$j=2,\ldots,7$\emph{). The figure is drawn
below.
\begin{figure}[h]%
\centering
\includegraphics[
height=1.2635in,
width=2.738in
]%
{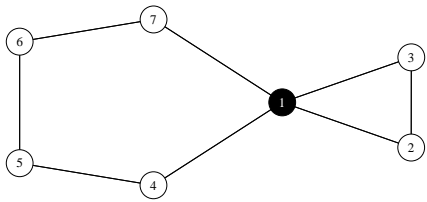}%
\caption{A graph $G$ with a partition of $V(G)$ into $\{1\}$\emph{\ }and
$\{2,\ldots,7\}$ satisfying condition 2 in Theorem \ref{asymp}. This is the
graph in Figure 1a of \cite{nicosia12}.}%
\end{figure}
}
\end{example}

\section{Open problems}

Of course the first problem to address, which is central to this paper, is
that of establishing a combinatorial characterisation of $\alpha$-Kuramoto
partitions in the general case. Further, we have just touched the surface of
asymptotic phase synchronisation in this paper, and more work needs to be done
in that direction. Exploring algorithmic applications of this notion is also a
potentially fruitful avenue for future research. We conclude the paper with a
few specific open problems.

\begin{problem}
Can we characterise the degree sequences that satisfy condition 2 of Theorem
\ref{asymp}? Specifically, suppose that $G$ is a connected graph and that
$S_{1}\cup S_{2}$ is a partition of its vertex set. Let $G_{S_{1}}$,
$G_{S_{2}}$ be the subgraphs of $G$ induced by $S_{1}$, $S_{2}$, respectively.
Find necessary and sufficient conditions on the degree sequences of $G$,
$G_{S_{1}}$ and $G_{S_{2}}$ in order that the bipartition $S_{1}\cup S_{2}$
satisfies condition 2 of Theorem \ref{asymp}. Observe that the sequences
$\delta_{i,1}$ ($i\in S_{1}$), $\delta_{j,2}$ ($j\in S_{2}$), and
$\delta_{p,1}+\delta_{p,2}$ ($p=1,\ldots,n$) must all be graphic (\emph{i.e.}
must be the degree sequence of some graph).

\end{problem}

\begin{problem}
Let $G$ be a connected graph, suppose that the bipartition $S_{1}\cup S_{2}$
satisfies condition 2 of Theorem \ref{asymp}, and let $\alpha$ be given by
(\ref{alform}). Determine the initial conditions such that the solutions of
the corresponding frustrated Kuramoto model (\ref{kuramoto}) exhibit
asymptotic synchronisation with $\lim_{t\rightarrow\infty}(\theta
_{i}(t)-\theta_{j}(t))=0$ for all $i,j\in S_{1}$ and all $i,j\in S_{2}$.
\end{problem}

\begin{problem}
Determine the values of $\alpha\in(0,\pi/2)$ such that for some connected
graph $G$, there is a bipartition of its vertex set as $S_{1}\cup S_{2}$ so
that condition 2 of Theorem \ref{asymp} holds and in addition $\alpha$ is
given by (\ref{alform}).
\end{problem}

\begin{problem}
We have seen that any regular graph admits a trivial $\alpha$-Kuramoto
partition. Adding pendant vertices or small subgraphs to a regular graph may
well change the picture. In the same spirit, it would be interesting to study
the behaviour of $\alpha$-Kuramoto partitions of graph products and other
graph operations.
\end{problem}

\emph{Acknowledgements.} We would like to thank Vito Latora and Vincenzo
Nicosia for teaching us the basic properties of the Kuramoto model and for
proposing the subject of this work. The first author is supported in part by
Science Foundation Ireland under grant number SFI/07/SK/I1216b. The second
author is supported by the Royal Society.

\end{document}